\documentclass[11pt]{article}
\usepackage[T1]{fontenc}
\usepackage[utf8]{inputenc}
\usepackage[margin=1in]{geometry}
\usepackage{amsmath,amssymb,amsthm}
\usepackage[colorlinks=true,linkcolor=blue,citecolor=blue,urlcolor=blue,breaklinks=true]{hyperref}
\newcommand{\GL}{\mathrm{GL}}

\newtheorem{theorem}{Theorem}[section]
\newtheorem{lemma}[theorem]{Lemma}
\newtheorem{proposition}[theorem]{Proposition}
\newtheorem{corollary}[theorem]{Corollary}
\theoremstyle{definition}
\newtheorem{conjecture}[theorem]{Conjecture}
\theoremstyle{remark}
\newtheorem{remark}[theorem]{Remark}

\newcommand{\Q}{\mathbb{Q}}
\newcommand{\Z}{\mathbb{Z}}
\newcommand{\F}{\mathbb{F}}
\newcommand{\PP}{\mathbb{P}}
\newcommand{\Hyp}{\mathbb{H}}
\newcommand{\ind}{\operatorname{ind}}
\newcommand{\pencil}{\mathcal{P}}

\title{A pencil of quadratic forms in nine variables\\ with no member of Witt index four}

\author{Tony Quertier\\[4pt]
\small Orange Research, Rennes, France\\[2pt]
\small\texttt{tony.quertier@orange.com}}
\date{}

\begin{document}

\maketitle

\begin{abstract}
We exhibit an explicit pair $(A,B)$ of integral symmetric $9\times9$
matrices, defining a nonsingular pair of quadratic forms over $\Q$, such that no member of the rational pencil
$\lambda q_A+\mu q_B$ has Witt index $4$ over $\Q$. This refutes a conjecture from \cite{Que16a}, which predicted that every nonsingular pair in $n$ variables generates a pencil containing a
form of Witt index $\lceil (n-1)/2\rceil$. The obstruction is purely $2$-adic and affects the whole pencil at once: every member has Witt index exactly $3$ over $\Q_2$. The proof is finite and elementary: a parity argument on $\det(\lambda A+\mu B)$, a congruence lemma
reducing $\PP^1(\Q_2)$ to the twelve classes of $\PP^1(\Z/8)$, and a verification at each class, in which the anisotropy
verdict is certified in two independent ways. The twelve ternary residues are displayed modulo $64$, which suffices to reproduce the anisotropy proofs independently. All scripts are provided in the GitHub repository.
\end{abstract}

\section{Introduction}

Let $q_0,q_1$ be quadratic forms in $n$ variables over $\Q$, with
symmetric matrices $Q_0,Q_1$. The Hasse principle for the system
$q_0=q_1=0$ is a classical subject: it holds for arbitrary pairs when
$n\ge13$ \cite{mordell1959integer} and $n\ge11$ \cite{swinnerton1964rational}, and for smooth
intersections when $n\ge9$ \cite{colliot1987intersections} and $n=8$ \cite{heath2018zeros}. The
\emph{effective} problem of exhibiting a rational solution is
much less developed; the algorithm of \cite{Quertier_2016} treats $n\ge13$,
and the author's thesis \cite{Que16a} develops probabilistic methods
for $n\ge11$ and $n\ge9$.

A key step in those methods is the search, inside the pencil
$\pencil_\Q(Q_0,Q_1)=\{\lambda Q_0+\mu Q_1,  (\lambda:\mu)\in
\PP^1(\Q)\}$, for a member of large Witt index over $\Q$: the larger
that index, the larger the totally isotropic subspace
on which the second form is then restricted. Over $\Q_p$ the
anisotropic dimension of a nondegenerate form is at most $4$
\cite{Serre:1993}, so for $n=9$ every member of the pencil has
local Witt index $3$ or $4$ at every finite place; the question is
whether the maximal value $4$ is attained by a single rational member.
The thesis conjectured that it always is.

\begin{conjecture}[{\cite{Que16a}}]
\label{conj:1028}
Let $q_0,q_1$ be quadratic forms in $n$ variables over $\Q$
satisfying Condition~1. Then the pencil $\pencil_\Q(Q_0,Q_1)$
contains a form of Witt index $\lceil (n-1)/2\rceil$ over $\Q$.
\end{conjecture}

Here Condition~1 is the nonsingularity condition of
\cite[Ch.~2]{Que16a} on the \emph{pair}; by \cite[Prop.~2.3.7]{Que16a}
it is equivalent to: $\det(Q_0)\ne0$ and
$\Delta(\lambda)=\det(\lambda Q_0+Q_1)$ has only simple roots over $\overline{\Q}$. The purpose of this paper is to show
that Conjecture~\ref{conj:1028} fails, by an explicit and fully
verifiable example.

\begin{theorem}\label{thm:main}
Let $A,B$ be the integral symmetric matrices of
\textup{(\ref{eq:matrices})} below. Then:
\begin{enumerate}
\item the pair $(q_A,q_B)$ satisfies Condition~1;
\item every member of the local pencil over $\Q_2$, i.e.\ every form
$c(\lambda q_A+\mu q_B)$ with $c\in\Q_2^\times$ and
$(\lambda:\mu)\in\PP^1(\Q_2)$, has Witt index exactly $3$ over $\Q_2$;
\item consequently every form
$\lambda q_A+\mu q_B$ with $(\lambda:\mu)\in\PP^1(\Q)$, has Witt
index at most $3<4=\lceil(9-1)/2\rceil$ over $\Q$, and
Conjecture~\ref{conj:1028} fails for $n=9$.
\end{enumerate}
\end{theorem}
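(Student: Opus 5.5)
For part~(1) I will simply compute. I will check that $\det A\neq 0$. I will then form the degree-$9$ polynomial $\Delta(\lambda)=\det(\lambda A+B)\in\Z[\lambda]$ and show that it is squarefree over $\overline{\Q}$, by computing $\gcd(\Delta,\Delta')$ or, more robustly, by showing $\operatorname{disc}(\Delta)\neq 0$. A convenient certificate is a prime $p$ not dividing the leading coefficient such that $\Delta \bmod p$ is squarefree. By \cite[Prop.~2.3.7]{Que16a} this is exactly Condition~1. In particular every member $\lambda q_A+\mu q_B$ with $(\lambda:\mu)\in\PP^1(\Q_2)$ outside the nine roots is nondegenerate. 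The finitely many degenerate members (roots of $\Delta$ in $\Q_2$, if any) need separate treatment. For these the radical is one-dimensional by simplicity of the roots, and the Witt index of the induced nondegenerate $8$-dimensional form will be bounded in the same way.

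Part~(3) follows formally from part~(2). If a rational member had Witt index $4$ over $\Q$, then extending scalars to $\Q_2$ would give four hyperbolic planes, so the Witt index over $\Q_2$ would be at least $4$, contradicting part~(2). Since $\lceil 8/2\rceil=4$, Conjecture~\ref{conj:1028} fails.

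The heart is part~(2). Scaling by $c$ does not change the Witt index, so we may normalize $(\lambda:\mu)$ to a primitive pair in $\Z_2^2$. A nondegenerate $9$-dimensional form over $\Q_2$ has Witt index $4$ exactly when its anisotropic part is $1$-dimensional; otherwise the anisotropic part is $3$-dimensional and the index is $3$. Equivalently, writing $q\simeq \Hyp^3\perp\langle d\rangle\perp\varphi$ with $\varphi$ binary, the index is $4$ if and only if a certain ternary residue form is isotropic. The plan has three steps.
\begin{enumerate}
\item A parity argument on $\det(\lambda A+\mu B)$ for primitive $(\lambda,\mu)$ controls the $2$-adic valuation of the discriminant, and hence the square class of $\det$ modulo units up to bounded information.
\item A congruence lemma shows that, for primitive $(\lambda,\mu)$, the isometry class over $\Q_2$ (or at least the Hasse invariant and discriminant class, which determine it) depends only on $(\lambda:\mu)$ modulo $8$. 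This reduces $\PP^1(\Q_2)$ to the twelve points of $\PP^1(\Z/8)$. The tool is a $2$-adic Hensel/Jordan-decomposition argument: perturbing the matrix by $8\cdot(\text{integral})$ does not change a Jordan splitting once the relevant unimodular and $2$-modular blocks are fixed, provided the valuations from step~1 are bounded.
\item For each of the twelve classes, I diagonalize (Jordan decomposition over $\Z_2$), split off three hyperbolic planes, and show that the remaining ternary form is anisotropic. This can be checked in two ways: via the criterion that a ternary form is anisotropic if and only if its Hasse invariant has a specific value relative to $(-1,-\det)$, and by an exhaustive search modulo $64$ showing there are no primitive zeros.
\end{enumerate}

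The main obstacle I expect is the congruence lemma. It must be uniform, covering members whose determinant has large $2$-adic valuation, including points near a $2$-adic root of $\Delta$, where the isometry class is not locally constant with a fixed modulus. The parity argument is meant precisely to exclude this: if $v_2(\det(\lambda A+\mu B))$ is bounded (say $\le 1$, or zero) on all primitive pairs, then $\Delta$ has no roots in $\Q_2$, degenerate members never occur, and a modulus-$8$ perturbation preserves the class. So I would first try to prove that $\det(\lambda A+\mu B)$ has bounded valuation for every primitive pair by checking the finitely many residues mod $2$ or $4$.
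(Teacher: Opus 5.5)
Your plan is correct and follows the paper's proof essentially step for step. The steps are: oddness of $\det(\lambda A+\mu B)$, checked at the three points of $\PP^1(\F_2)$; a Hensel-type lemma that unimodular $\Z_2$-forms congruent modulo $8$ are $\Z_2$-isometric, so that the Witt index (though not the isometry class itself, because of rescaling by units) depends only on the point of $\PP^1(\Z/8)$; at each of the twelve classes, splitting off three hyperbolic planes and certifying the ternary residue anisotropic by a primitive-zero search modulo $64$, cross-checked with the Hasse invariant; and part~(3) by extension of scalars.

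One correction to your hedge: modulus $8$ is only sufficient in the unimodular case. For example, $\langle 2\rangle$ and $\langle 10\rangle$ are congruent modulo $8$ but not isometric over $\Q_2$, so ``valuation $\le 1$'' would not suffice. Here, however, the parity check gives valuation $0$, which also makes your separate treatment of degenerate members unnecessary.
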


The pair was found by a random search, and the obstruction is concentrated
at the single prime $2$. That 2-adically pairs exist is not new in itself: in Hall's classification of pairs over $p$-adic fields \cite{hall2024pairs} this is the value $H=3$ for $n=9$, realized by explicit local constructions.
What fails is not one exceptional member but the pencil as a whole: all twelve $2$-adic congruence classes of parameters are simultaneously bounded at index $3$.
The proof is finite: a parity argument shows
that $\det(\lambda A+\mu B)$ is odd for every primitive
$(\lambda,\mu)$ (\S\ref{sec:reduction}); a congruence lemma shows that the $\Z_2$-isometry class of
$\lambda A+\mu B$ only depends on the class of $(\lambda:\mu)$ in
$\PP^1(\Z/8)$, which has twelve points (\S\ref{sec:reduction}); and
at each of the twelve classes the ternary residue after splitting off
three hyperbolic planes is anisotropic over $\Q_2$, certified by an
exhaustive enumeration modulo $64$. This is a proof in itself, requiring no Hasse invariant and is confirmed by an independent cross-check
(\S\ref{sec:verification}). Section~\ref{sec:proof} assembles the
argument, and collects some remarks. 

All computations are reproducible from the accompanying PARI/GP
scripts, publicly available at
\url{https://github.com/Fitz-AI/Programme_GP}; every numerical claim below rests on exact
rational or modular arithmetic; no floating point is used anywhere.

\section{Notation and two elementary lemmas}\label{sec:prelim}

Let $k$ be a field of characteristic $\ne2$ and $q$ a nondegenerate
quadratic form on a $k$-vector space $V$. All maximal totally
isotropic subspaces of $V$ have the same dimension, the \emph{Witt
index} $\ind_k(q)$ is the number $w$ of
hyperbolic planes in a Witt decomposition
$q\cong\Hyp^w\perp q_{\mathrm{an}}$ with $q_{\mathrm{an}}$
anisotropic, and $q_{\mathrm{an}}$ is unique up to isometry \cite{Que16a,Lam2005}.

\begin{lemma}[Localization]\label{lem:localization}
For every nondegenerate form $q$ over $\Q$ and every prime $p$,
$\ind_\Q(q)\le\ind_{\Q_p}(q)$.
\end{lemma}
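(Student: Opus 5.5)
The plan is to show that a totally isotropic subspace over $\Q$ remains totally isotropic after extension of scalars to $\Q_p$, and that its dimension does not change. Let $V=\Q^n$ carry the form $q$, and let $W\subseteq V$ be a maximal totally isotropic subspace. By the definition recalled above, $\dim_\Q W=\ind_\Q(q)=:w$. Choose a $\Q$-basis $w_1,\dots,w_w$ of $W$, and let $b$ be the associated symmetric bilinear form, so that $b(w_i,w_j)=0$ for all $i,j$.

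First, extend scalars along the inclusion $\Q\subset\Q_p$. The vectors $w_1,\dots,w_w$ lie in $\Q_p^n$, and they stay linearly independent there: the rank of the $n\times w$ matrix of their coordinates is the rank of a rational matrix. That rank is detected by nonvanishing minors, so it does not depend on which field containing $\Q$ we compute it in. Hence $W_p:=\Q_p w_1+\dots+\Q_p w_w$ has $\Q_p$-dimension $w$.

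Next, check isotropy. Over $\Q_p$ the form is still given by the same matrix, and $q$ remains nondegenerate because $\det Q\neq0$. By bilinearity, $b(x,y)=\sum_{i,j}x_iy_j\,b(w_i,w_j)=0$ for all $x=\sum x_iw_i$ and $y=\sum y_jw_j$ in $W_p$. So $W_p$ is a totally isotropic subspace of $\Q_p^n$ of dimension $w$.

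Finally, apply to $q\otimes\Q_p$ the fact that every totally isotropic subspace is contained in a maximal one, and that all maximal ones have dimension $\ind_{\Q_p}(q)$. This gives $w\le\ind_{\Q_p}(q)$. Equivalently, a Witt decomposition $q\cong\Hyp^w\perp q_{\mathrm{an}}$ over $\Q$ stays a decomposition over $\Q_p$. Then $\Hyp^w$ is a subform there, and by Witt cancellation the number of hyperbolic planes can only grow.

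No real obstacle is expected. The only point needing care is that linear independence is preserved under field extension, and the rank argument handles it.
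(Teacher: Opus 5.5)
Your proposal is correct and follows the paper's own proof: take a totally isotropic $W\subseteq\Q^n$ of dimension $\ind_\Q(q)$ and observe that $W\otimes_\Q\Q_p$ is still totally isotropic of the same dimension for $q\otimes\Q_p$. You simply make explicit two steps the paper's one-line argument leaves implicit, namely that the dimension is preserved under extension of scalars and that the bound then follows from maximality over $\Q_p$.
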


\begin{proof}
If $W\subseteq\Q^n$ is totally isotropic of dimension
$r=\ind_\Q(q)$, then $W\otimes_\Q\Q_p$ is totally isotropic of
dimension $r$ for $q\otimes\Q_p$.
\end{proof}

Note that only this trivial direction is
used; the full Hasse--Minkowski theorem is not needed anywhere in this paper.

\begin{lemma}[Homothety invariance]\label{lem:homothety}
For $c\in k^\times$, $\ind_k(cq)=\ind_k(q)$.
\end{lemma}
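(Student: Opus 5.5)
The statement concerns totally isotropic subspaces, so the plan is to show that $q$ and $cq$ have exactly the same ones; both Witt indices then coincide. I will use the characterization of $\ind_k(q)$ recalled above: it is the common dimension of the maximal totally isotropic subspaces of $V$. Since $c\in k^\times$, the form $cq$ is again nondegenerate on the same space $V$, and its associated bilinear form is $c\,b_q$, where $b_q$ denotes the polar form of $q$.

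For the key step, take a subspace $W\subseteq V$. It is totally isotropic for $q$ when $q(w)=0$ for all $w\in W$, or equivalently (as the characteristic is $\ne 2$) when $b_q$ vanishes on $W\times W$. Since $c\neq 0$, we have $cq(w)=0$ if and only if $q(w)=0$. So $q$ and $cq$ have exactly the same totally isotropic subspaces. In particular they have the same maximal ones, and these have the same dimension, which gives $\ind_k(cq)=\ind_k(q)$.

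As a cross-check through Witt decompositions, write $q\cong\Hyp^w\perp q_{\mathrm{an}}$. Then $cq\cong c\Hyp^w\perp cq_{\mathrm{an}}$. The form $c\Hyp\cong\langle c,-c\rangle$ is again a hyperbolic plane, because it is a nondegenerate isotropic binary form. The form $cq_{\mathrm{an}}$ is still anisotropic, since $cq_{\mathrm{an}}(x)=0$ forces $q_{\mathrm{an}}(x)=0$ and hence $x=0$. By uniqueness of the Witt decomposition, $\ind_k(cq)=w$.

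I do not expect a real obstacle. The only point needing care is that we use "totally isotropic" in the sense of the quadratic form, or equivalently of the bilinear form in characteristic $\ne 2$, and that scaling by a unit changes neither notion.
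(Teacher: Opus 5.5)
Your proof is correct and is essentially the paper's own argument: since $c\neq 0$, the forms $q$ and $cq$ have exactly the same totally isotropic subspaces, and hence the same Witt index. Your Witt-decomposition cross-check is also valid but not needed; it uses $c\Hyp\cong\Hyp$ and the fact that $cq_{\mathrm{an}}$ remains anisotropic.
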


\begin{proof}
Since $c\ne0$, a subspace is totally isotropic for $cq$ if and only
if it is totally isotropic for $q$.
\end{proof}

By Lemma~\ref{lem:homothety}, every member of the pencil is, up to a
scalar not affecting Witt indices, of the form $\lambda A+\mu B$ with
$(\lambda,\mu)\in\Z^2$ primitive; we work with such representatives
throughout.

\section{The pencil}\label{sec:pencil}

\begin{equation}\label{eq:matrices}
{\scriptsize\setlength{\arraycolsep}{2.4pt}
\begin{gathered}A=\begin{pmatrix}
-6&0&2&10&1&-3&-6&8&8\\
0&-3&-2&-9&8&8&-5&1&-2\\
2&-2&-2&8&-2&0&1&2&-8\\
10&-9&8&-6&-6&2&-2&-4&3\\
1&8&-2&-6&10&5&4&-1&-7\\
-3&8&0&2&5&-4&2&-4&-9\\
-6&-5&1&-2&4&2&4&-3&7\\
8&1&2&-4&-1&-4&-3&10&3\\
8&-2&-8&3&-7&-9&7&3&-8
\end{pmatrix},\\[4pt]
B=\begin{pmatrix}
-1&-2&-6&-3&-5&-6&-2&-2&3\\
-2&-1&1&-9&0&-1&-10&1&-6\\
-6&1&1&-5&-5&-6&10&1&-6\\
-3&-9&-5&-4&10&0&-1&-8&4\\
-5&0&-5&10&7&-8&-9&-8&3\\
-6&-1&-6&0&-8&-5&-3&3&-3\\
-2&-10&10&-1&-9&-3&3&3&3\\
-2&1&1&-8&-8&3&3&7&9\\
3&-6&-6&4&3&-3&3&9&5
\end{pmatrix}.\end{gathered}}
\end{equation}

\begin{proposition}\label{prop:condition1}
The pair $(q_A,q_B)$ satisfies Condition~1.
\end{proposition}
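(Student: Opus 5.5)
By the equivalence recalled after Conjecture~\ref{conj:1028} (\cite[Prop.~2.3.7]{Que16a}), it suffices to show two things: $\det(A)\neq0$, and the polynomial $\Delta(\lambda)=\det(\lambda A+B)\in\Z[\lambda]$ has only simple roots over $\overline{\Q}$. Both are finite exact computations with integer matrices, and I would carry them out so that they can be checked independently of any software.

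\textbf{Step 1: nonsingularity of $A$.} Rather than computing $\det(A)$ over $\Z$, I would reduce modulo $2$. The parity argument announced for \S\ref{sec:reduction} states that $\det(\lambda A+\mu B)$ is odd for every primitive $(\lambda,\mu)$. Taking $(\lambda,\mu)=(1,0)$ gives that $\det A$ is odd, hence nonzero. If one prefers not to rely on that later section, a direct Gaussian elimination of $A$ over $\F_2$ shows that $A\bmod 2$ is invertible. A full fraction-free (Bareiss) elimination over $\Z$ would also give the exact value of $\det A$.

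\textbf{Step 2: the polynomial $\Delta$.} Compute $\Delta(\lambda)=\det(\lambda A+B)$ exactly. This can be done by Bareiss elimination over $\Z[\lambda]$, or by evaluating $\det(jA+B)$ at ten integer points $j=0,\dots,9$ and interpolating. The result is a degree-$9$ integer polynomial with leading coefficient $\det A\neq0$.

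\textbf{Step 3: squarefreeness.} Show that $\Delta$ is squarefree, i.e.\ that its discriminant is nonzero, or equivalently that $\gcd(\Delta,\Delta')=1$ in $\Q[\lambda]$. The cheapest certificate is modular. Choose a prime $p$ not dividing the leading coefficient $\det A$; $p=2$ itself is a natural first try, since $\det A$ is odd. Then check that $\Delta\bmod p$ still has degree $9$ and is squarefree over $\F_p$, by computing $\gcd(\bar\Delta,\bar\Delta')$ in $\F_p[\lambda]$ with the Euclidean algorithm. This suffices: if $\Delta$ had a repeated root over $\overline{\Q}$, then $\operatorname{disc}(\Delta)=0$, and since the degree is preserved mod $p$, the discriminant of $\bar\Delta$, which is the reduction of $\operatorname{disc}(\Delta)$, would vanish as well.

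The mod $2$ reduction is especially natural here. The parity statement says that the binary form $\det(\lambda A+\mu B)$ has no zero on $\PP^1(\F_2)$, which already hints that its reduction is tame, although it does not by itself give squarefreeness.

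\textbf{Main obstacle.} The hard part is only practical: getting $\Delta$ right, or at least its reduction mod $p$, and finding a prime $p$ for which $\bar\Delta$ is squarefree of full degree. If $p=2$ fails, I would try $p=3,5,7,\dots$. Some such $p$ must work, because a nonzero discriminant has only finitely many prime divisors, so the search terminates as soon as $\Delta$ is squarefree over $\Q$. As a cross-check, I would also compute the exact integer discriminant of $\Delta$ with the PARI/GP scripts and confirm that it is nonzero.
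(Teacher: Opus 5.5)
Your proof is correct and has the same outline as the paper's. You reduce via \cite[Prop.~2.3.7]{Que16a} to two checks, $\det A\neq0$ (so that $\Delta(\lambda)=\det(\lambda A+B)$ has degree exactly $9$) and squarefreeness of $\Delta$, and you settle both by finite exact computation.

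The difference lies in the certificates. The paper records the exact values $\det A=813229703$ and $\det B=9479005821$, and asserts $\gcd(\Delta,\Delta')=1$ in $\Q[\lambda]$, leaving that computation to the scripts. You obtain $\det A\neq0$ from parity alone. You then certify squarefreeness by reducing modulo a prime $p\nmid\det A$, using that the discriminant commutes with reduction when the degree is preserved.

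Your certificate is smaller and essentially checkable by hand: a Euclidean algorithm on degree-$9$ polynomials over $\F_p$, with no coefficient growth. The cost is that you may have to try several primes. The paper's check is the condition itself, but it is only as transparent as the software that computes it.

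For Step~1, prefer your direct elimination of $A$ over $\F_2$ to citing Lemma~\ref{lem:odd}. The proof of that lemma itself quotes $\det A$ from this proposition, so citing it here would make the presentation circular.
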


\begin{proof}
By \cite[Prop.~2.3.7]{Que16a} it suffices that one of the two
determinants be nonzero and that $\Delta$ have simple roots over
$\overline{\Q}$. A direct computation gives
\[
\det A=813229703, \qquad \det B=9479005821,
\]
both nonzero, so the degree of the dehomogenized
$\Delta(\lambda)=\det(\lambda A+B)$ is exactly $9$ and we have $\gcd(\Delta,\Delta')=1$ in $\Q[\lambda]$,.
The exact computation is included in the accompanying scripts.
\end{proof}

\section{Reduction to twelve congruence classes}\label{sec:reduction}

Every point of $\PP^1(\Q_2)$ admits a representative
$(\lambda,\mu)\in\Z_2^2$ with at least one coordinate a unit; we
call such a representative \emph{primitive}. It is unique up to
multiplication by $\Z_2^\times$.

\begin{lemma}\label{lem:odd}
$\Delta(\lambda,\mu)=\det(\lambda A+\mu B)$ is a $2$-adic unit for
every primitive $(\lambda,\mu)\in\Z_2^2$; in particular
$\lambda A+\mu B$ is unimodular over $\Z_2$ for every such pair.
\end{lemma}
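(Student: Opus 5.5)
Since $\Delta(\lambda,\mu)=\det(\lambda A+\mu B)$ is a homogeneous polynomial of degree $9$ with integer coefficients, its value modulo $2$ at a pair $(\lambda,\mu)\in\Z_2^2$ depends only on the reductions $(\bar\lambda,\bar\mu)\in(\Z/2)^2$. Indeed, $\Delta(\lambda,\mu)\equiv\det(\bar\lambda\bar A+\bar\mu\bar B)\pmod 2$, where bars denote reduction modulo $2$. For a primitive pair, $(\bar\lambda,\bar\mu)$ is one of the three nonzero vectors $(1,0)$, $(0,1)$, $(1,1)$ of $\F_2^2$. The plan is therefore to check that $\Delta$ is odd at these three points.

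\textbf{Step 1: the points $(1,0)$ and $(0,1)$.} Here $\Delta(1,0)=\det A=813229703$ and $\Delta(0,1)=\det B=9479005821$, both computed in Proposition~\ref{prop:condition1}. Both numbers are odd, so these two cases are immediate.

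\textbf{Step 2: the point $(1,1)$.} It remains to show that $\det(A+B)$ is odd, that is, that $A+B$ is invertible over $\F_2$. I would reduce $A+B$ entrywise modulo $2$ and compute the determinant of the resulting symmetric $9\times 9$ matrix over $\F_2$ by Gaussian elimination. This is a finite computation that can be done by hand, and the accompanying script can confirm it. Alternatively, one can compute the integer $\det(A+B)$ exactly and check its parity. This is the only step not already covered by earlier results, so it is the main (though routine) obstacle. I would present the mod-$2$ reduction explicitly so that the reader can verify it.

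\textbf{Conclusion.} Let $(\lambda,\mu)$ be primitive. Then $\Delta(\lambda,\mu)\equiv\Delta(\bar\lambda,\bar\mu)\equiv1\pmod 2$, so $\Delta(\lambda,\mu)\in\Z_2^\times$. A matrix with entries in $\Z_2$ whose determinant is a unit is invertible over $\Z_2$, so the lattice $\Z_2^9$ equipped with $\lambda A+\mu B$ is unimodular. This gives the "in particular" clause of the lemma.
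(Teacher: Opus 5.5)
Your proposal is correct and matches the paper's proof: reduce $\Delta$ modulo $2$, use the three points $(1,0)$, $(0,1)$, $(1,1)$ of $\PP^1(\F_2)$, and check that $\det A$, $\det B$ and $\det(A+B)$ are odd. The paper closes your Step~2 by stating the value $\det(A+B)=25187288207$, which is odd; a Gaussian elimination of $A+B$ over $\F_2$, as you propose, also gives full rank.
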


\begin{proof}
Matrices $A$ and $B$ have integer coefficients, so $\Delta(\lambda,\mu)\bmod2$ is
defined for $(\lambda,\mu)\in\Z_2^2$ and only depends on
$(\lambda,\mu)\bmod2$; since $\PP^1(\F_2)$ has three points, it
suffices to check the three representatives $(1,0)$, $(0,1)$,
$(1,1)$. Recalling $\det A=813229703$ and
$\det B=9479005821$ from Proposition~\ref{prop:condition1},
\[
\det(A+B)=\Delta(1,1)=25187288207,
\]
all three of $\Delta(1,0)$, $\Delta(0,1)$, $\Delta(1,1)$ are odd.
\end{proof}

\begin{lemma}[Congruence modulo $8$]\label{lem:mod8}
Let $M,M'$ be symmetric matrices over $\Z_2$ with odd determinant
and $M\equiv M'\pmod8$. Then $M$ and $M'$ are
$\Z_2$-isometric: there is $T\in GL_n(\Z_2)$ with
$T^{\mathsf t}MT=M'$.
\end{lemma}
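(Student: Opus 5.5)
The plan is to prove this by successive approximation (a Newton/Hensel iteration in the style of the proof that unimodular $\Z_2$-forms are locally rigid). We build a sequence of matrices $T_k\in\GL_n(\Z_2)$, $k\ge3$, with $T_k\equiv I$ modulo increasing powers of $2$, such that the error in $T^{\mathsf t}MT\approx M'$ improves by one power of $2$ at each step. The product of the $T_k$ then converges in $\GL_n(\Z_2)$ to the required $T$.

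\emph{Setup of one step.} Suppose $N$ is symmetric over $\Z_2$ with odd determinant and $M'-N=2^kE$ with $E$ symmetric over $\Z_2$ and $k\ge3$. Start with $N=M$, $k=3$. Since $\det N$ is a unit, $N^{-1}$ has entries in $\Z_2$. Split $2^kE$ as $S+S^{\mathsf t}$ with $S$ upper triangular, $S_{ii}=2^{k-1}E_{ii}$ and $S_{ij}=2^kE_{ij}$ for $i<j$. Then $S\equiv0\pmod{2^{k-1}}$. Put $X=N^{-1}S$ and $T=I+X$; since $X\equiv0\pmod{2^{k-1}}$ and $k-1\ge2$, $T\in\GL_n(\Z_2)$. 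Expanding,
\[
T^{\mathsf t}NT=N+NX+X^{\mathsf t}N+X^{\mathsf t}NX=N+S+S^{\mathsf t}+S^{\mathsf t}N^{-1}S=M'+S^{\mathsf t}N^{-1}S .
\]
The new error $S^{\mathsf t}N^{-1}S$ is symmetric and divisible by $2^{2(k-1)}$. Because $k\ge3$, we have $2k-2\ge k+1$, so the new error is divisible by $2^{k+1}$. Also $\det(T^{\mathsf t}NT)=\det(T)^2\det N$ is still odd. This closes the induction.

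\emph{Convergence.} Iterating gives $N_k=P_k^{\mathsf t}MP_k$ with $P_k=T_3T_4\cdots T_k$, $T_j\equiv I\pmod{2^{j-1}}$, and $N_k\equiv M'\pmod{2^{k+1}}$. Hence $P_{k+1}-P_k=P_k(T_{k+1}-I)\equiv0\pmod{2^{k}}$, so $(P_k)$ is Cauchy in $M_n(\Z_2)$. Its limit $P$ satisfies $P\equiv I\pmod 4$, so $P\in\GL_n(\Z_2)$, and by continuity $P^{\mathsf t}MP=M'$.

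\emph{Main obstacle.} The only delicate point is the factor $2$ on the diagonal. A symmetric $2^kE$ cannot in general be written as $NX+X^{\mathsf t}N$ with $X\equiv0\pmod{2^k}$, because the diagonal of $Y+Y^{\mathsf t}$ is even. This is why $S$ loses one power of $2$ on the diagonal, and why the quadratic term is only $O(2^{2k-2})$. The inequality $2k-2>k$ is exactly what forces the hypothesis modulo $8$ rather than modulo $4$: at $k=2$ the iteration makes no progress. I would double-check this bookkeeping carefully, including that $E$ need not have even diagonal; the construction of $S$ handles arbitrary diagonal entries. Everything else is routine.
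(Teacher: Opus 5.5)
Your proof is correct and is essentially the paper's argument: the same Newton-type correction $T=I+N^{-1}S$ with $S+S^{\mathsf t}$ equal to the current error, whose quadratic term $S^{\mathsf t}N^{-1}S$ is divisible by $2^{2k-2}$, followed by convergence of the product of the $T_k$ in $\GL_n(\Z_2)$. The differences are cosmetic. The paper uses the symmetric half $S=2^{n_k-1}E_k$ (that is, $X_k=M_k^{-1}E_k$, $T_k=I+2^{n_k-1}X_k$) instead of your upper-triangular split, and this loses the same single power of $2$. It also records the quadratic improvement $n_{k+1}=2n_k-2$, whereas you settle for $k\to k+1$, which suffices.
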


\begin{proof}
Let $M,M'\in M_n(\Z_2)$ be symmetric matrices with odd determinant. Put $M_0=M$ and $n_0=3$. We construct matrices $M_k$, $\Z_2$-isometric to $M$, and integers $n_k\geq 3$ such that
$$
M'-M_k=2^{n_k}E_k,
\qquad E_k\in M_n(\Z_2)\ \text{symmetric}.
$$

Given $M_k$, set $X_k=M_k^{-1}E_k\in M_n(\mathbb{Z}_2)$ (well defined
since $\det M_k$ is a unit) and $T_k=I+2^{n_k-1}X_k$. As $n_k\ge3$,
$T_k\equiv I\pmod 4$, so $T_k\in\mathrm{GL}_n(\mathbb{Z}_2)$. By
symmetry of $M_k$ and $E_k$ we have $E_k=M_kX_k=X_k^{\mathsf t}M_k$,
whence
$$
T_k^{\mathsf t}M_kT_k
=M_k+2^{n_k}E_k+
  2^{2n_k-2}X_k^{\mathsf t}M_kX_k\\
=M'+2^{2n_k-2}X_k^{\mathsf t}M_kX_k.
$$
Thus, on putting
$$
M_{k+1}=T_k^{\mathsf t}M_kT_k,
\qquad
n_{k+1}=2n_k-2,
$$
we have
$$
M'-M_{k+1}\in 2^{n_{k+1}}M_n(\Z_2).
$$

Since $n_0=3$ and $n_{k+1}=2n_k-2$, the sequence $n_k$ tends to
infinity. Moreover,
$
T_k\equiv I\pmod{2^{n_k-1}},
$
so the products $T_0T_1\cdots T_k$ converge in $\GL_n(\Z_2)$ to a
matrix $T$. Passing to the limit gives
$
T^{\mathsf t}MT=M'.
$
\end{proof}

The modulus $8$ is sharp: the rank-one forms $\langle1\rangle$ and
$\langle5\rangle$ are congruent modulo $4$ but are not isometric over
$\Z_2$. The statement is classical 
\cite[\S93]{o1973introduction} or \cite[Ch.~15]{conway1999sphere}.

\begin{corollary}\label{cor:mod64-anisotropic}
Assume, as is the case here by Lemma~\ref{lem:odd}, that
$\lambda A+\mu B$ is unimodular over $\Z_2$ for every primitive
$(\lambda,\mu)\in\Z_2^2$. Then the function $(\lambda:\mu)\mapsto \ind_{\Q_2}(\lambda A+\mu B)$ on $\PP^1(\Q_2)$ depends only on the reduction of $(\lambda,\mu)$ in $\PP^1(\Z/8)$.
\end{corollary}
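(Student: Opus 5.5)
The plan is to show that the Witt index is well defined on points of $\PP^1(\Q_2)$, and then that it is constant on the fibres of the reduction map $\PP^1(\Q_2)\to\PP^1(\Z/8)$. Both steps will come from Lemma~\ref{lem:homothety} and Lemma~\ref{lem:mod8}, applied to primitive representatives.

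\textbf{Well-definedness.} Any point of $\PP^1(\Q_2)$ has a primitive representative $(\lambda,\mu)\in\Z_2^2$, unique up to a factor $u\in\Z_2^\times$. Replacing $(\lambda,\mu)$ by $(u\lambda,u\mu)$ multiplies the matrix by $u$. By Lemma~\ref{lem:homothety} the Witt index over $\Q_2$ is unchanged, so $(\lambda:\mu)\mapsto\ind_{\Q_2}(\lambda A+\mu B)$ is a function on $\PP^1(\Q_2)$. The same argument covers any non-primitive representative, since it differs from a primitive one by a scalar in $\Q_2^\times$.

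\textbf{Constancy on fibres.} Suppose two points $(\lambda:\mu)$ and $(\lambda':\mu')$ of $\PP^1(\Q_2)$ have the same image in $\PP^1(\Z/8)$. A point of $\PP^1(\Z/8)$ is a pair in $(\Z/8)^2$ with a unit coordinate, taken modulo $(\Z/8)^\times$. So for primitive representatives there is $v\in(\Z/8)^\times$ with $(\lambda',\mu')\equiv v(\lambda,\mu)\pmod 8$. Lift $v$ to a unit $u\in\Z_2^\times$; any odd integer will do. Replace $(\lambda,\mu)$ by the primitive representative $(u\lambda,u\mu)$ of the same point. Then $(\lambda,\mu)\equiv(\lambda',\mu')\pmod 8$ coordinatewise.

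Now put $M=\lambda A+\mu B$ and $M'=\lambda' A+\mu' B$. Since $A$ and $B$ are integral, $M\equiv M'\pmod 8$. Both matrices are symmetric over $\Z_2$ with odd determinant, by the unimodularity hypothesis, i.e.\ Lemma~\ref{lem:odd}. Lemma~\ref{lem:mod8} therefore gives $T\in\GL_n(\Z_2)$ with $T^{\mathsf t}MT=M'$. In particular $M$ and $M'$ are isometric over $\Q_2$. The Witt index is an isometry invariant, because $T$ maps totally isotropic subspaces of $M'$ bijectively onto those of $M$. Hence $\ind_{\Q_2}(M)=\ind_{\Q_2}(M')$.

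\textbf{Main point of care.} The only delicate step is the normalization by the unit $u$. Equality in $\PP^1(\Z/8)$ gives congruence of the matrices only up to a unit scalar, and Lemma~\ref{lem:mod8} needs genuine congruence. Lifting the unit from $\Z/8$ to $\Z_2^\times$ and invoking Lemma~\ref{lem:homothety} handles this. One should also check that reduction of primitive pairs does land in $\PP^1(\Z/8)$: a primitive pair has a unit coordinate, and it stays a unit modulo $8$.
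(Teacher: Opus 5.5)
Your proposal is correct and follows essentially the same route as the paper. You rescale the primitive representatives by a lifted unit $\tilde u\in\Z_2^\times$ (harmless by Lemma~\ref{lem:homothety}) so that they become congruent modulo $8$, then apply Lemma~\ref{lem:mod8} to get a $\Z_2$-isometry and hence equal Witt index. The only difference is the order: you normalize first and invoke Lemma~\ref{lem:mod8} once, while the paper first treats exactly congruent pairs and then handles the unit twist.
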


\begin{proof}
If $(\lambda,\mu)\equiv(\lambda',\mu')\pmod8$, then since $A,B$ have
integer entries, $\lambda A+\mu B\equiv\lambda'A+\mu'B\pmod8$, and both are unimodular. By Lemma~\ref{lem:mod8} they are
$\Z_2$-isometric hence of equal Witt index.
Now let $u\in(\Z/8)^\times$ and lift it to $\tilde u\in\Z_2^\times$.
Then $(\tilde u\lambda)A+(\tilde u\mu)B=\tilde u(\lambda A+\mu B)$
exactly, and any primitive pair $(\lambda',\mu')\equiv u(\lambda,\mu)
\pmod8$ gives, by the first part, a form $\Z_2$-isometric to
$\tilde u(\lambda A+\mu B)$, whose Witt index equals that of
$\lambda A+\mu B$ by Lemma~\ref{lem:homothety}.

\end{proof}

The set $\PP^1(\Z/8)$ has exactly $12$ points. Indeed, if $\mu$ is a unit, the class has a unique representative $(a:1)$ with $a\in\Z/8$, giving eight classes. Otherwise $\lambda$ is a unit, and there are four further classes.We use the following representatives
\begin{gather*}
(0{:}1),\,(1{:}1),\,(2{:}1),\,(3{:}1),\,(4{:}1),\,(5{:}1),\,(6{:}1),\,(7{:}1),\\
(1{:}0),\,(1{:}4),\,(1{:}2),\,(3{:}2).
\end{gather*}

\section{Verification at the twelve classes}
\label{sec:verification}

For each of the twelve representatives $(\lambda:\mu)$ of $\PP^1(\Z/8)$, we compute an exact rational decomposition of $Q=\lambda A+\mu B$ into three hyperbolic planes and a ternary residue.
We prove that the residue is anisotropic by an exhaustive enumeration modulo $64$ and we check it using the
discriminant and the $2$-adic Hasse invariant.

\subsection*{Stage 1: rational decomposition}
An exact rational matrix $P$ with $\det(P)\ne0$ is computed such
that
\begin{equation}\label{eq:certificate}
P\,Q\,P^{\mathsf t}=\Hyp^3\perp R ,
\end{equation}
where the right-hand side is block-diagonal: three
hyperbolic $2\times2$ blocks with zero diagonal and nonzero
off-diagonal entry and $R$ a ternary symmetric
matrix. The identity \textup{(\ref{eq:certificate})} is checked in exact rational arithmetic; no floating-point computation is used 
(the occurring values of $\det P$ are $\pm1,\pm\tfrac13,\pm\tfrac15$; $P$ need not be integral, only invertible over $\Q$). 
The computed residues $R$ have entries in $\Z$ and odd determinant, checked explicitly for all twelve points.

\subsection*{Stage 2: exhaustive enumeration modulo $64$}

The value $64$ is not claimed to be optimal. It is simply a finite precision at which all twelve residues admit no primitive zero modulo $64$.

\begin{lemma}\label{lem:mod64}
Let $R$ be a ternary symmetric matrix with entries in $\Z_2$. If the
form $x\mapsto x^{\mathsf t}Rx$ is isotropic over $\Q_2$,
then there is a vector $v\in\Z_2^3$, not all of whose entries are
even, with $v^{\mathsf t}Rv\equiv0\pmod{64}$.
\end{lemma}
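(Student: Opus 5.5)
This is a standard scaling argument. Suppose $q(x)=x^{\mathsf t}Rx$ is isotropic over $\Q_2$, and pick a nonzero $w\in\Q_2^3$ with $q(w)=0$. The plan is to rescale $w$ so that it becomes a primitive integral vector. Then reducing the exact identity $v^{\mathsf t}Rv=0$ modulo $64$ gives the claim. Note that nothing here uses the hypothesis that $\det R$ is odd: the lemma is stated for any $R$ over $\Z_2$, and the argument does not need unimodularity.

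The steps, in order, are as follows.

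\textbf{Step 1 (rescaling).} Let $m=\min_i v_2(w_i)$, which is finite since $w\neq 0$. Put $v=2^{-m}w$. Then $v\in\Z_2^3$ and at least one coordinate of $v$ is a $2$-adic unit, i.e.\ $v$ is not all even. Because $q$ is homogeneous of degree $2$,
\[
q(v)=2^{-2m}q(w)=0 .
\]

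\textbf{Step 2 (reduction).} Since $R$ has entries in $\Z_2$ and $v\in\Z_2^3$, the value $v^{\mathsf t}Rv$ is an element of $\Z_2$, and it equals $0$. In particular $v^{\mathsf t}Rv\equiv 0\pmod{64}$.

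\textbf{Step 3 (matching the finite enumeration).} For the later use of the lemma, replace $v$ by any $v'\in\Z^3$ with $v'\equiv v\pmod{64}$, for instance coordinates in $\{0,\dots,63\}$. Then $v'^{\mathsf t}Rv'\equiv v^{\mathsf t}Rv\pmod{64}$, since the difference is a sum of terms each containing a factor of $v'-v$, which lies in $64\Z_2^3$. Moreover $v'$ is still not all even. So an exhaustive search over $(\Z/64)^3$ minus $(2\Z/64)^3$ that finds no zero of $q$ modulo $64$ certifies anisotropy over $\Q_2$. This is the contrapositive of the lemma.

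There is no real obstacle here. The only point needing care is Step 1, where the primitivity normalization must be made compatible with the phrase ``not all of whose entries are even''. That compatibility is exactly what the choice $m=\min_i v_2(w_i)$ guarantees.
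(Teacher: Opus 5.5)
Your proof is correct and is essentially the paper's argument: rescale an isotropic vector by $2^{-m}$ with $m=\min_i v_2(w_i)$ to get a primitive integral zero, then reduce modulo $64$. Your Step 3, which checks that passing to integer representatives modulo $64$ preserves both the congruence and primitivity, is a harmless extra detail that the paper leaves implicit when it speaks of reducing $v$ modulo $64$.
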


\begin{proof}
Let $x\in\Q_2^3\setminus\{0\}$ such that $x^{\mathsf t}Rx=0$. Put
$m=\min_i v_2(x_i)$ and $v=2^{-m}x.$
Then $v\in\Z_2^3$, at least one coordinate of $v$ is a unit, and
$v^{\mathsf t}Rv=0.$
Reducing $v$ modulo $64$ gives the required primitive vector
$\bar v$.
\end{proof}

So, the \emph{absence} of such a $v$, established by complete enumeration of the
$64^3-32^3=229376$ primitive residue vectors is a proof of anisotropy over $\Q_2$, not a heuristic. It uses no Hasse invariant, no Hilbert symbol and no sign convention.


Table~\ref{tab:certificates} lists, for each representative, $\det P$
and $R\bmod64$.
\begin{table}[h]
\centering
\footnotesize
\begin{tabular}{c c c c}
$(\lambda:\mu)$ & $\det P$ & $R\bmod64$ & verdict\\
\hline
$(0{:}1)$ & $1/3$ & $\begin{pmatrix}14&-7&24\\-7&10&-10\\24&-10&-7\end{pmatrix}$ & anisotropic\\
$(1{:}1)$ & $-1$ & $\begin{pmatrix}-29&-16&-28\\-16&-26&19\\-28&19&-26\end{pmatrix}$ & anisotropic\\
$(2{:}1)$ & $1/5$ & $\begin{pmatrix}19&-10&14\\-10&-17&-31\\14&-31&22\end{pmatrix}$ & anisotropic\\
$(3{:}1)$ & $-1$ & $\begin{pmatrix}22&-13&-26\\-13&14&-19\\-26&-19&-25\end{pmatrix}$ & anisotropic\\
$(4{:}1)$ & $-1$ & $\begin{pmatrix}-5&6&-11\\6&-13&-31\\-11&-31&-31\end{pmatrix}$ & anisotropic\\
$(5{:}1)$ & $-1$ & $\begin{pmatrix}-14&-14&-25\\-14&-25&26\\-25&26&2\end{pmatrix}$ & anisotropic\\
$(6{:}1)$ & $1$ & $\begin{pmatrix}-22&-13&-31\\-13&-9&-22\\-31&-22&18\end{pmatrix}$ & anisotropic\\
$(7{:}1)$ & $-1/5$ & $\begin{pmatrix}29&-1&-18\\-1&-9&-3\\-18&-3&2\end{pmatrix}$ & anisotropic\\
$(1{:}0)$ & $-1$ & $\begin{pmatrix}25&-15&5\\-15&10&17\\5&17&-14\end{pmatrix}$ & anisotropic\\
$(1{:}2)$ & $1$ & $\begin{pmatrix}11&-30&17\\-30&-13&-17\\17&-17&-27\end{pmatrix}$ & anisotropic\\
$(1{:}4)$ & $1$ & $\begin{pmatrix}-3&21&21\\21&26&22\\21&22&23\end{pmatrix}$ & anisotropic\\
$(3{:}2)$ & $1$ & $\begin{pmatrix}-5&-25&7\\-25&13&-8\\7&-8&-27\end{pmatrix}$ & anisotropic\\
\end{tabular}
\caption{The twelve certificates: representative, $\det P$ from
\textup{(\ref{eq:certificate})}, the residue $R$ reduced modulo $64$,
and the Stage~2 verdict.}
\label{tab:certificates}
\end{table}

\begin{proposition}\label{prop:twelve}
For each of the twelve representatives, the certified residue $R$ is
anisotropic over $\Q_2$.
\end{proposition}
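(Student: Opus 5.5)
The plan is to apply Lemma~\ref{lem:mod64} in contrapositive form, separately for each of the twelve residues $R$ of Table~\ref{tab:certificates}. Fix a representative $(\lambda:\mu)$ and let $R$ be the integral ternary matrix produced in Stage~1. Since $R$ has entries in $\Z\subset\Z_2$, Lemma~\ref{lem:mod64} applies. If $q_R(x)=x^{\mathsf t}Rx$ were isotropic over $\Q_2$, there would be a primitive $v\in\Z_2^3$ with $v^{\mathsf t}Rv\equiv0\pmod{64}$. It therefore suffices to show that no primitive residue vector $\bar v\in(\Z/64)^3$, meaning one with some coordinate odd, satisfies $\bar v^{\mathsf t}R\bar v\equiv0\pmod{64}$.

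The first observation is that this condition depends only on $R\bmod 64$ and $v\bmod 64$. Indeed, $v^{\mathsf t}Rv=\sum_{i,j}R_{ij}v_iv_j$ is a polynomial with integer coefficients, so its class modulo $64$ is determined by the classes of the entries. This justifies working only with the matrices displayed in the table. The verification for each of the twelve matrices is then a finite check over the $64^3-32^3=229376$ primitive vectors, and I would simply run it with exact integer arithmetic, as in the accompanying scripts. The output of the proof is the statement that each of the twelve enumerations returns the empty set.

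To make this reproducible by hand, or at least less opaque, I would also prune the search by lifting level by level. Start with primitive zeros of $q_R$ modulo $2$, keep those that lift modulo $4$, then modulo $8$, and so on up to $64$. Discard a branch as soon as no lift of $v\bmod 2^k$ is a zero modulo $2^{k+1}$. This is valid because a zero modulo $2^{k+1}$ reduces to a zero modulo $2^k$. Odd determinant helps here: $q_R$ is then a unimodular ternary form, and modulo $8$ its behaviour is already strongly constrained. In many cases the tree dies out well before level $64$.

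The main obstacle is not conceptual but one of certification. The argument is only as good as the correctness of the enumeration, and of the residue $R\bmod 64$, which must genuinely be the reduction of the exact Stage~1 residue. I would guard against error with the independent cross-check the section announces. For each $R$, compute $d=\det R$ and the Hasse invariant at $2$ of a diagonalization of $R$. Then use the classical criterion that a ternary form over $\Q_2$ is anisotropic iff its Hasse invariant, in the appropriate normalization, differs from $(-1,-d)_2$. Both verdicts should agree on all twelve classes.
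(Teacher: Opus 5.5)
Your proposal is correct and follows the paper's proof: you apply the contrapositive of Lemma~\ref{lem:mod64} and exhaustively check that none of the $229376$ primitive vectors modulo $64$ is a zero of any of the twelve displayed matrices $R \bmod 64$. Your level-by-level pruning is a valid equivalent way to run the same enumeration. Your Hasse-invariant check corresponds to the independent cross-check the paper records in the remark after the proposition; it is not part of the paper's proof.
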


\begin{proof}
For each displayed matrix $R\bmod64$, exhaustive enumeration shows
that there is no primitive vector $\bar v\in(\Z/64\Z)^3$ such that $\bar v^{\mathsf t}R\bar v\equiv0\pmod{64}$.
The contrapositive of Lemma~\ref{lem:mod64} implies that $R$ is anisotropic over
$\Q_2$.

\end{proof}

\begin{remark}
As a second, independent verification of Proposition~\ref{prop:twelve},
the $2$-adic Hasse invariant $\varepsilon_2(R)$ and discriminant were computed and the standard isotropy criterion for ternary forms over
$\Q_2$ \cite[Ch.~IV, Th.~6]{Serre:1993} applied.
\end{remark}

\section{Proof of Theorem \ref{thm:main}}\label{sec:proof}

Assertion (1) is Proposition~\ref{prop:condition1}. For (2), by Lemma~\ref{lem:homothety} (applied over $k=\Q_2$) it
suffices to treat $f=\lambda A+\mu B$ with $(\lambda,\mu)\in\Z_2^2$
primitive. Let $(\lambda_0:\mu_0)$ be the representative of its
class in $\PP^1(\Z/8)$ and $f_0=\lambda_0A+\mu_0B$; by
Corollary~\ref{cor:mod64-anisotropic}, $\ind_{\Q_2}(f)=\ind_{\Q_2}(f_0)$. The hyperbolic
decomposition \textup{(\ref{eq:certificate})} shows
$f_0\cong\Hyp^3\perp R$ over $\Q$, hence over $\Q_2$, and $R$ is
anisotropic over $\Q_2$ by Proposition~\ref{prop:twelve}. Since an
anisotropic form contributes no hyperbolic plane, the uniqueness of
the Witt decomposition (\S\ref{sec:prelim}) gives
$\ind_{\Q_2}(f_0)=3$ exactly.
To conclude, part (2)
already gives $\ind_{\Q_2}(f)=3$, then Lemma~\ref{lem:localization} gives
$\ind_\Q(f)\le\ind_{\Q_2}(f)=3<4$, while
Conjecture~\ref{conj:1028} predicts a member of index
$\lceil(9-1)/2\rceil=4$. \qed



\begin{remark} The real place is not the obstruction.
All twelve representatives have real signature $(5,4)$ (exact
computation), hence real Witt index $4$. The obstruction in
Theorem~\ref{thm:main} is purely $2$-adic.
\end{remark}

\begin{remark} The obstruction concerns the Witt index of the pencil
members, not the solvability of the system. The intersection
$q_A=q_B=0$ does have nontrivial rational points, consistently with
the Hasse principle for smooth intersections in $n\ge 9$ variables
\cite{colliot1987intersections}: restricting $q_B$ to a
three-dimensional totally isotropic subspace of $q_A$  (the Witt
index actually available) instead of a four-dimensional one (the
probabilistic algorithm of \cite{Que16a}), one finds for instance the
primitive integer vector
\[
{\scriptsize
\begin{aligned}
x = (&-2023143055309007350974423271763384994,\;
      -704297098135925649317717433999163886,\\
     &-1853836140207868899197216763733254780,\;
      -478782649071877980721472147202524490,\\
     &-2039661165416372301411314506186206708,\;
      \phantom{-}1256917741536325273190865891078359145,\\
     &-1184941076714184060721032491716759075,\;
      \phantom{-}118250923600070411700001932819569088,\\
     &-1074412061229562970510044783385111636),
\end{aligned}}
\]
which satisfies $x^{\mathsf t}Ax=x^{\mathsf t}Bx=0$ exactly. What
Theorem~\ref{thm:main} rules out is the existence of a form in the pencil with four
hyperbolic planes, not the existence of solutions.
\end{remark}

The counterexample is a \emph{local} obstruction: at $p=2$ no member
of the $\Q_2$-pencil attains index $4$. It therefore says nothing
against the local-global form of the question,
\cite[Conjecture~10.2.7]{Que16a} whether the Witt index
attainable in the rational pencil equals the minimum over all places
of the index attainable in the local pencils --- which remains open
and which we now regard as the correct statement.


\bibliographystyle{plain}
\bibliography{bib-179}

@article{Quertier_2016, title={Effective {H}asse principle for the intersection of two quadrics}, volume={19}, DOI={10.1112/S146115701600022X}, number={A}, journal={LMS Journal of Computation and Mathematics}, author={Quertier, Tony}, year={2016}, pages={73–82}}

@phdthesis{Que16a,
url = "http://www.theses.fr/2016CAEN2027",
title = "Résolution de systèmes de deux équations quadratiques",
author = "Quertier, Tony",
year = "2016",
school={Universit{\'e} de Caen},
pages = "1 vol. (128 f.)",
note = "Thèse de doctorat dirigée par Simon, Denis Mathématiques Caen 2016"
}

@book{Serre:1993,
  title={A course in arithmetic},
  author={Serre, Jean-Pierre},
  year={2012},
  publisher={Springer Science \& Business Media}
}

@book{Lam2005,
  author    = {Tsit-Yuen Lam},
  title     = {Introduction to Quadratic Forms over Fields},
  publisher = {American Mathematical Society},
  year      = {2005},
  series    = {Graduate Studies in Mathematics},
  volume    = {67}
}

@article{swinnerton1964rational,
  title={Rational zeros of two quadratic forms},
  author={Swinnerton-Dyer, H},
  journal={Acta Arithmetica},
  volume={9},
  pages={261--270},
  year={1964},
  publisher={Instytut Matematyczny Polskiej Akademii Nauk}
}

@inproceedings{mordell1959integer,
  title={Integer solutions of simultaneous quadratic equations: To {H}elmut {H}asse on his 60th birthday},
  author={Mordell, LJ},
  booktitle={Abhandlungen aus dem Mathematischen Seminar der Universit{\"a}t Hamburg},
  volume={23},
  number={1},
  pages={126--143},
  year={1959},
  organization={Springer}
}

@book{o1973introduction,
  title={Introduction to quadratic forms},
  author={O'Meara, OT},
  volume={117},
  year={1973},
  publisher={Springer}
}

@book{conway1999sphere,
  title={Sphere packings, lattices and groups},
  author={Conway, John H. and Sloane, Neil J. A.},
  volume={290},
  year={1999},
  publisher={Springer New York}
}

@article{colliot1987intersections,
  title={Intersections of two quadrics and Ch{\^a}telet surfaces. {II}.},
  author={Colliot-Th{\'e}lene, J-L and Sansuc, J-J},
  journal={Journal f{\"u}r die reine und angewandte Mathematik},
  volume={374},
  pages={72--168},
  year={1987}
}

@phdthesis{hall2024pairs,
  title={Pairs of Quadratic Forms over p-Adic Fields},
  author={Hall, John R},
  year={2024},
  school={University of Kentucky},
  note = {DOI: 10.13023/etd.2024.128}
}

@article{heath2018zeros,
  title={Zeros of pairs of quadratic forms},
  author={Heath-Brown, DR},
  journal={Journal f{\"u}r die reine und angewandte Mathematik (Crelles Journal)},
  volume={2018},
  number={739},
  pages={41--80},
  year={2018},
  publisher={De Gruyter}
}

\end{document}